\title{Natural Deduction for the Sheffer Stroke and Peirce's Arrow
  (And Any Other Truth-Functional Connective)}
\titlerunning{Natural Deduction for the Sheffer Stroke}
\author{Richard Zach\thanks{Published in: \emph{Journal of Philosophical Logic} (2016) 45:183--197. \href{https://doi.org/10.1007/s10992-015-9370-x}{DOI 10.1007/s10992-015-9370-x}}}
\institute{Richard Zach \at University of Calgary, Department of Philosophy, 2500 University Dr NW, Calgary, AB T2N0A9, Canada, 
http://ucalgary.ca/rzach/, \email{rzach@ucalgary.ca}}
\renewcommand{\fitchctx}[1]{%
\advance \fitchctxwidth by -\fitchsep%
\advance \fitchctxwidth by .5pt%
\begin{tabular}[b]{r@{}|p{\fitchctxwidth}@{}l}
\phantom{\slider} \\[-1.75ex]
#1 \\[-1.75ex] & &
\end{tabular}
\advance \fitchctxwidth by \fitchsep%
\advance \fitchctxwidth by -.5pt%
}
\newcommand{\subproofx}[2]{%
\advance \fitchprfwidth by -\fitchsep%
\advance \fitchprfwidth by .5pt%
\hspace*{.35em}%
\begin{tabular}[t]{|p{0pt}@{}p{\fitchprfwidth}@{\hspace*{\fitchsep}}l}
  \multicolumn{3}{@{}l@{}}{\ }\\[-2.35ex]
  #1 \\
  \ \\[-2.5ex] \cline{1-1}\\[-2ex]
  #2 %\\ \multicolumn{3}{@{}l@{}} \ \\[-2.35ex]
 \end{tabular}
\advance \fitchprfwidth by \fitchsep%
}
\def\possessivecite#1{\citeauthor{#1}'s \citeyearpar{#1}}
\def\shef{\mathbin{|}}
\def\fesh{\mathbin{\|}}
\def\pier{\mathbin{\downarrow}}
\def\xor{\oplus}
\def\LS{\mathbf{LS}}
\def\LJ{\mathbf{LJ}}
\def\LK{\mathbf{LK}}
\def\NJ{\mathbf{NJ}}
\def\NK{\mathbf{NK}}
\def\NS{\mathbf{NS}}
\begin{document}
\maketitle

\begin{abstract}
Methods available for the axiomatization of arbitrary finite-valued
logics can be applied to obtain sound and complete intelim rules for
all truth-functional connectives of classical logic including the
Sheffer stroke (\textsc{nand}) and Peirce's arrow (\textsc{nor}).  The
restriction to a single conclusion in standard systems of natural
deduction requires the introduction of additional rules to make the
resulting systems complete; these rules are nevertheless still simple
and correspond straightforwardly to the classical absurdity
rule. Omitting these rules results in systems for intuitionistic
versions of the connectives in question.
\subclass{03A05, 03F03}
\end{abstract}

\section{Introduction}

In a recent paper, \cite{HazenPelletier:14} compared
\possessivecite{Jaskowski:34} and \possessivecite{Gentzen:34}
versions of natural deduction.  In Section~2.3 of their paper, Hazen
and Pelletier consider \possessivecite{Price:61} natural deduction
rules for the Sheffer stroke (\textsc{nand}).  These rules are:
\[
\infer[\shef I_P]{A \shef B}{\infer*{B \shef B}{[A]}}
\qquad
\infer[\shef E_P]{B \shef B}{A & A \shef B}
\qquad
\infer[{\shef}{\shef} E_P]{B}{A & (B \shef B) \shef A}
\]
Although these rules are sound and complete for a logic in which
$\shef$ is the only primitive, they are not of the standard
``intelim'' form.  Hazen and Pelletier attempt to give a
Ja\'skowski-Fitch style natural deduction system in Section~3.3 which
comes closer to this ideal but find that their rules do not
characterize the classical logic of~$\shef$ except in the attenuated
sense that adding these rules to a standard natural deduction system
allows one to prove $(A \shef B) \leftrightarrow \lnot(A \land B)$
(``parasitic completeness'').  By themselves, their rules are
``ambiguous'' between the two intuitionistic versions $\lnot(A \land
B)$ and $\lnot A \lor \lnot B$.  They then discuss the framework of
Schr\"oder-Heister's generalized natural deduction and show that Do\v
sen's $\star$, which is an ``indigenous Sheffer function'' for
intuitionistic logic in the sense that every connective can be
expressed in terms of it, can be given intelim rules in this
framework.  They leave open the question of how the classical Sheffer
stroke should be dealt with in natural deduction, as well as the
status of their proposed rules. The aim of this note is to show how
one can give a natural deduction system of standard intelim rules for
the classical Sheffer stroke and its intuitionistic variants, and to
discuss the properties of the resulting systems. It is of course
possible to arrive at the same rules directly; indeed, \cite{Read1999}
did so. The methods presented here, however, are completely general
and can be applied to any set of truth-functional connectives; we
sketch this also for Pierce's arrow (\textsc{nor}) and exclusive
or.

\section{Obtaining natural deduction rules}

In a series of papers, \cite{BaazFermZach:93,BaazFermZach:94,Zach:93}
showed how finite-valued logics can be given sequent calculus
axiomatizations based on a simple procedure that yields introduction
rules for each connective and truth value, and where sequents have as
many places as the logic has truth values. They proved completeness,
cut-elimination, midsequent theorem, interpolation, and generalized it
to multiple-conclusion natural deduction systems for finite-valued
logics. The natural deduction systems have the normal form property,
and there are translations from natural deduction derivations to
sequent calculus proofs and vice versa. In the case of two-valued
classical logic, the construction yields exactly the classical sequent
calculus $\LK$, and a multiple-conclusion system of natural deduction
with the standard introduction rules for~$\lnot$, $\land$, $\lor$,
$\to$, $\forall$, and $\exists$, and ``general elimination rules''.
This system is complete for classical logic without the need to add
axioms (e.g., $A \lor \lnot A$) or rules such as double negation
elimination or Prawitz's $\bot_C$ to Gentzen's $\NJ$.\footnote{The
  general elimination rules coincide with the elimination rules of
  \possessivecite{Parigot:92a} free deduction; he showed how to obtain
  the standard elimination rules from them by systematic
  simplification. A version of the simplified system of classical
  multiple-conclusion natural deduction was studied
  by~\citet{Parigot:92}. The general elimination rules for $\land$ and
  $\to$ were studied by \citet{vonPlato2001}.}

For the Sheffer stroke, this procedure yields sequent calculus rules
as follows.  The Sheffer stroke is a truth-functional connective, and
any truth function can be expressed as a conjunctive normal form of
its arguments.  We can thus express the conditions under which a
formula of the form $A \shef B$ is true in terms of a conjunction of
disjunctions of $A$, $\lnot A$, $B$, and~$\lnot B$.  In the same way
we can express the conditions under which $A \shef B$ is false, i.e.,
express $\lnot(A \shef B)$ as a conjunctive normal form.
\begin{description}
\item[T] $A \shef B$ is true iff $A$ is false or $B$ is false:
\[A \shef B \Leftrightarrow \lnot A \lor \lnot B\]
\item[F] $A \shef B$ is false iff $A$ is true and $B$ is true:
\[\lnot(A \shef B) \Leftrightarrow A \land B\]
\end{description}

From these normal forms we then directly obtain introduction rules
for~$\shef$ in the succedent and antecedent.  The normal form for $A
\shef B$ provides the introduction rule for $\shef$ in the
succedent~$\shef$R; that for $\lnot(A \shef B)$ the introduction rule
for the antecedent~$\shef$L.  In each case, the conjuncts in the
normal form correspond to the premises.  In each premise sequent, put
the formula $A$ on the right if it occurs without negation in the
corresponding conjunct, and on the left if it occurs negated, and the
same for~$B$.  Thus, for~$\shef R$ we have one premise since the
conjunctive normal form has just one conjunct $\lnot A \lor \lnot B$,
and since $A$ and $B$ are both negated, $A$ and $B$ both go on the
left: $A, B \vdash$.  For~$\shef L$ we have two premises corresponding
the the two (one-element) disjunctions $A$ and~$B$: $\vdash A$ and
$\vdash B$.  The sequents in the full rule then also contain
the context formulas $\Gamma$, $\Delta$.
\[
\infer[\shef R]{\Gamma \vdash
  \Delta, A \shef B}{A, B, \Gamma \vdash \Delta} 
 \qquad 
\infer[\shef L]{A \shef B, \Gamma \vdash \Delta}{\Gamma \vdash \Delta,
  A & \Gamma \vdash \Delta, B}
\]
Let us call the sequent calculus that includes these two rules, the
axioms $A \vdash A$, and the usual structural rules (weakening,
exchange, contraction on the left and right, as well as cut)~$\LS$. 

\begin{proposition}
$\LS$ is sound and cut-free complete.
\end{proposition}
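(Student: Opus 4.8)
The plan is to prove the two halves---soundness, and completeness of the \emph{cut-free} part of $\LS$---separately and semantically; taken together they also show that the cut rule is admissible in cut-free $\LS$, since a sound and cut-free-complete calculus proves exactly the valid sequents whether or not cut is present. One could instead observe that $\LS$ is precisely the calculus that the construction of \citet{BaazFermZach:93} returns when applied to the two-valued logic whose only connective is $\shef$, so soundness, completeness, and cut-elimination are all instances of the general theorems cited above; the argument below is that general argument specialized to the present case. Throughout, a valuation $v$ assigns truth values to propositional variables and is extended to compounds by reading $\shef$ as \textsc{nand}, and a sequent $\Gamma \vdash \Delta$ is \emph{valid} if no valuation makes every formula in $\Gamma$ true and every formula in $\Delta$ false.

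For soundness I would show by induction on the height of a derivation that every derivable sequent is valid. The axioms $A \vdash A$ are valid, and each structural rule---weakening, exchange, contraction, cut---preserves validity by the familiar arguments. The two remaining cases both exploit that $v(A \shef B)$ is true exactly when $v(A)$ is false or $v(B)$ is false. For $\shef R$: if some $v$ made every formula in $\Gamma$ true and every formula in $\Delta, A \shef B$ false, then in particular $v(A)$ and $v(B)$ are both true, so $v$ would make all of $A, B, \Gamma$ true and all of $\Delta$ false, contradicting validity of the premise. For $\shef L$: if some $v$ made all of $A \shef B, \Gamma$ true and all of $\Delta$ false, then $v(A \shef B)$ is true, so $v(A)$ is false or $v(B)$ is false; in the first case $v$ falsifies the premise $\Gamma \vdash \Delta, A$ and in the second the premise $\Gamma \vdash \Delta, B$, a contradiction.

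For cut-free completeness I would prove the sharper dichotomy: \emph{every sequent $\Gamma \vdash \Delta$ is either derivable in $\LS$ without cut, or falsified by some valuation,} by induction on the number of occurrences of $\shef$ in $\Gamma \cup \Delta$. If there are none, all formulas are propositional variables; if some variable lies on both sides the sequent follows from an axiom by weakening and exchange, and otherwise the valuation making true exactly the variables occurring in $\Gamma$ falsifies the sequent. Otherwise choose an occurrence of a compound $A \shef B$. If it is in the succedent, say $\Delta = \Delta', A \shef B$, consider the $\shef R$-premise $A, B, \Gamma \vdash \Delta'$: by the induction hypothesis it is either cut-free derivable---whence $\Gamma \vdash \Delta', A \shef B$ is, by $\shef R$---or falsified by some $v$, and then $v(A)$ and $v(B)$ are true, so $v(A \shef B)$ is false and $v$ falsifies $\Gamma \vdash \Delta', A \shef B$ as well. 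If the occurrence is in the antecedent, say $\Gamma = A \shef B, \Gamma'$, consider the two $\shef L$-premises $\Gamma' \vdash \Delta, A$ and $\Gamma' \vdash \Delta, B$: if both are cut-free derivable so is $A \shef B, \Gamma' \vdash \Delta$ by $\shef L$, while if $v$ falsifies, say, $\Gamma' \vdash \Delta, A$, then $v(A)$ is false, so $v(A \shef B)$ is true and $v$ falsifies $A \shef B, \Gamma' \vdash \Delta$. Combining this with soundness finishes the argument: a valid sequent has no falsifying valuation, hence must be cut-free derivable.

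I expect the proof to be essentially routine, the one point requiring care being the termination measure in the completeness induction: one must check that passing from the conclusion of $\shef R$ or $\shef L$ to a premise strictly decreases the total count of $\shef$ occurrences, no matter how other occurrences of the principal formula are handled, so that the induction is well founded. Beyond this the argument is a standard instance of the Sch\"utte--Kleene-style completeness proof, and I do not anticipate a genuine obstacle.
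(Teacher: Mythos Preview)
Your proof is correct. The paper's own proof is a one-line appeal to the general results of \citet{BaazFermZach:94} and \citet{Zach:93}, of which $\LS$ is a special case; you acknowledge this option yourself and then carry out the specialized argument directly. So the route is genuinely different in presentation: the paper defers to the general many-valued machinery, while you give a self-contained semantic proof---soundness by induction on derivations, and cut-free completeness via the Sch\"utte--Kleene dichotomy by induction on the number of $\shef$ occurrences. Your approach buys self-containment and makes the invertibility of $\shef L$ and $\shef R$ visible (each premise is falsified by any valuation that falsifies the conclusion, and vice versa), at the cost of not being reusable for other connectives without re-doing the case analysis; the paper's citation buys uniformity across all truth-functional connectives at once. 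Your worry about the termination measure is well placed but easily discharged: decomposing a single occurrence of $A\shef B$ replaces it by occurrences of $A$ and $B$, which together contain exactly one fewer $\shef$, regardless of any other copies of the principal formula elsewhere in the sequent.
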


\begin{proof}
Follows as a special case of \citet[Theorems~3.1,
  3.2]{BaazFermZach:94} and \citet[Theorems~3.3.3, 3.3.10]{Zach:93}.
\qed\end{proof}

Gentzen-style natural deduction rules are obtained from sequent
calculus rules by turning the premises ``sideways.'' Formulas in the
antecedent of a premise become assumptions. The auxiliary formulas in
the antecedent are those that may be discharged in an application of
the rule; the context formulas~$\Gamma$ are suppressed. Formulas in
the consequent become the conclusion of the corresponding premise. For
the introduction rules, the premises then are just the sideways
premises of the $\shef$R rule.  For the elimination rule, the
principal formula $A \shef B$ in the $\shef L$ rule becomes an
additional premise for the $\shef E$ rule. The natural deduction rules
for~$\shef$ obtained this way then are:
\[
\infer[\shef I_m]{\Delta, A \shef B}{\infer*{\Delta}{[A] & [B]}}
\qquad
\infer[\shef E_m]{\Delta}{\Delta, A \shef B & \Delta, A & \Delta, B}
\]
Note that this version of natural deduction allows multiple formulas
in the conclusion position.  Let us refer to this system as $\NS_m$,
the subscript~$m$ for ``multiple conclusion.''  It requires structural
rules that allow conclusions to be weakened, contracted, and
rearranged.  As for all the natural deduction systems constructed in
this way, there are simple translations of proofs in $\LS$ into
deductions in $\NS_m$ and vice versa. The system $\NS_m$ is thus also
sound and complete for the classical Sheffer stroke.

\begin{proposition}
$\LS$ proves a sequent $\Gamma \vdash \Delta$ iff $\NS_m$ derives
  $\Delta$ from assumptions~$\Gamma'$ with $\Gamma' \subseteq \Gamma$.
\end{proposition}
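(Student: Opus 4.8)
The plan is to establish the two directions separately, each by induction on the structure of the relevant derivation, reading off the needed construction one rule at a time. Throughout I would treat the assumptions of an $\NS_m$ derivation as a set, so that the side condition $\Gamma' \subseteq \Gamma$ is precisely what absorbs left-weakening and left-contraction on the sequent side; on the conclusion side I would use the structural rules of $\NS_m$ (conclusion weakening, contraction, exchange) to mirror the right structural rules of $\LS$, and conversely.

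For the direction from $\LS$ to $\NS_m$, I would first note that by the preceding proposition $\LS$ is sound and cut-free complete, so $\LS$ proves $\Gamma \vdash \Delta$ iff cut-free $\LS$ does, and hence it suffices to translate cut-free proofs. Inducting on such a proof: an axiom $A \vdash A$ is matched by the one-line $\NS_m$ derivation consisting of the assumption $A$, which derives $\{A\}$ from $\{A\}$; the left structural rules are absorbed by $\Gamma' \subseteq \Gamma$ and the right ones are matched by the corresponding conclusion rules of $\NS_m$. For $\shef R$ the inductive hypothesis yields an $\NS_m$ derivation of $\Delta$ from some $\Gamma' \subseteq A, B, \Gamma$; applying $\shef I_m$ and discharging every occurrence of $A$ and of $B$ (vacuously if none occurs) gives a derivation of $\Delta, A \shef B$ from $\Gamma' \setminus \{A, B\} \subseteq \Gamma$. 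For $\shef L$ the inductive hypothesis gives $\NS_m$ derivations of $\Delta, A$ from some $\Gamma_1 \subseteq \Gamma$ and of $\Delta, B$ from some $\Gamma_2 \subseteq \Gamma$; taking additionally the assumption $A \shef B$ and weakening its conclusion to $\Delta, A \shef B$, one application of $\shef E_m$ produces a derivation of $\Delta$ from $\{A \shef B\} \cup \Gamma_1 \cup \Gamma_2 \subseteq \{A \shef B\} \cup \Gamma$, which is what the sequent $A \shef B, \Gamma \vdash \Delta$ requires.

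For the direction from $\NS_m$ to $\LS$, I would induct on the $\NS_m$ derivation of $\Delta$ from $\Gamma'$, in fact producing an $\LS$ proof of $\Gamma' \vdash \Delta$; left-weakening then gives $\Gamma \vdash \Delta$ whenever $\Gamma' \subseteq \Gamma$. A lone assumption $A$ corresponds to the axiom $A \vdash A$, and the conclusion structural rules of $\NS_m$ are matched by the right structural rules of $\LS$. For $\shef I_m$ the subderivation derives $\Delta$ from a set containing (among the discharged formulas) possibly $A$ and $B$, so by the inductive hypothesis $\LS$ proves $A, B, \Gamma' \vdash \Delta$ and $\shef R$ gives $\Gamma' \vdash \Delta, A \shef B$. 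For $\shef E_m$ the three subderivations derive $\Delta, A \shef B$, $\Delta, A$ and $\Delta, B$ — with the side-formula multiset $\Delta$ literally common to all three, since the rule demands it — from assumption sets $\Gamma_1$, $\Gamma_2$, $\Gamma_3$; by the inductive hypothesis $\LS$ proves $\Gamma_1 \vdash \Delta, A \shef B$, $\Gamma_2 \vdash \Delta, A$ and $\Gamma_3 \vdash \Delta, B$; weakening the last two to the common antecedent $\Gamma_2 \cup \Gamma_3$, an application of $\shef L$ gives $A \shef B, \Gamma_2 \cup \Gamma_3 \vdash \Delta$, and a cut on $A \shef B$ against $\Gamma_1 \vdash \Delta, A \shef B$ gives $\Gamma_1 \cup \Gamma_2 \cup \Gamma_3 \vdash \Delta$, i.e., $\Gamma' \vdash \Delta$.

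I expect the only real subtlety, beyond the bookkeeping of contexts, discharged occurrences, vacuous discharge and the set-versus-multiset conventions, to be the role of cut: the translation of $\shef E_m$ genuinely uses a cut, so $\NS_m$ derivations correspond to $\LS$ proofs \emph{with} cut, not to cut-free ones. Dually, simulating the cut rule of $\LS$ directly inside $\NS_m$ would call for a composition (substitution) lemma for multiple-conclusion derivations; avoiding that is exactly why it is convenient to reduce the $\LS$-to-$\NS_m$ direction to cut-free proofs by appeal to the preceding proposition.
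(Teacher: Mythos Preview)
Your argument is correct. The paper itself only cites general results here and points forward to the single-conclusion translation (Proposition~\ref{prop:transl}) as an illustration of the method; that translation handles the cut rule \emph{directly}, simulating a cut in natural deduction by grafting the derivation of the cut formula onto every open assumption of that formula in the other subderivation. You take a different route in the $\LS$-to-$\NS_m$ direction: you first invoke soundness plus cut-free completeness to reduce to cut-free $\LS$ proofs, and only then translate. This buys you exactly what you say it does---you avoid having to state and verify a composition (substitution) lemma for multiple-conclusion derivations---at the price of a detour through semantics (equivalently, of treating cut elimination as a black box). The paper's direct approach is purely syntactic and yields a tighter, structure-preserving correspondence between $\LS$ proofs \emph{with} cut and $\NS_m$ derivations, not merely equivalence of provability; your approach is slicker for the bare biconditional but gives up that extra structural information. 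One small omission: in your $\shef E_m$ translation the cut against $\Gamma_1 \vdash \Delta, A \shef B$ literally yields $\Gamma_1, \Gamma_2 \cup \Gamma_3 \vdash \Delta, \Delta$, so a round of right contractions is still needed before you reach $\Gamma' \vdash \Delta$.
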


\begin{proof}
Follows from the general results in \citet[Theorem 4.7,
  5.4]{BaazFermZach:93} and \citet[Theorem~4.2.8, 4.3.4]{Zach:93}. See
Prop.~\ref{prop:transl} below for the translation in this specific
case.
\qed\end{proof}

\section{Single-conclusion natural deduction}

Natural deduction systems commonly require that each conclusion is a
single formula.  In order to obtain rules for a single-conclusion
natural deduction system, we begin by introducing a corresponding
restriction into the sequent calculus: the succedent in each sequent
in a proof must contain at most one formula.  This in effect means
that the $\shef L$, $\shef R$, and $WR$ (weakening-right) rules
become:
\[
\infer[\shef R']{\Gamma \vdash
  A \shef B}{A, B, \Gamma \vdash } 
 \qquad 
\infer[\shef L']{A \shef B, \Gamma \vdash }{\Gamma \vdash A & \Gamma \vdash B}
\qquad
\infer[WR']{\Gamma \vdash A}{\Gamma \vdash}
\]
Exchange and contraction on the right are no longer needed. We will
refer to the resulting calculus as~$\LS'$.

For the corresponding natural deduction system, we also have to
accommodate the empty sequence of conclusion formulas, i.e., we have
to introduce a placeholder symbol~$\bot$ for the empty conclusion in
proofs.  The inference rules will be adapted for single conclusions by
replacing empty conclusions sequences by~$\bot$.  The single
conclusion rules corresponding to $\shef R'$, $\shef L'$, and $WR'$
then are:
\[
\infer[\shef I]{A \shef B}{\infer*{\bot}{[A] & [B]}}
\qquad
\infer[\shef E]{\bot}{A \shef B & A & B}
\qquad
\infer[\bot_I]{A}{\bot}
\]
The $\bot_I$ rule is named as in \citep{Prawitz:65}; \citet{Gentzen:34}
did not give a name to the rule.  The special cases where $A = B$
correspond to Gentzen's $\lnot I$ and $\lnot E$ rules:
\[
\infer[\lnot I]{\lnot A}{\infer*{\bot}{A}}
\qquad
\infer[\shef I]{A \shef A}{\infer*{\bot}{A}} 
\qquad\qquad
\infer[\lnot E]{\bot}{\lnot A & A}
\qquad
\infer[\shef E]{\bot}{A \shef A & A}
\]
The resulting system $\NS$ exactly corresponds to $\LS'$, the sequent
calculus where the succedent of each sequent is restricted to at most
one formula.\footnote{The use of the special symbol~$\bot$ is not
  strictly necessary, as it could be replaced by explicitly
  contradictory premises (e.g., $C \shef D$, $C$, and $D$; or more
  simply $C \shef C$ and~$C$) when it appears as a premise, and by an
  arbitrary formula when it appears as the conclusion. This has the
  advantage that the resulting rules mention no symbols other
  than~$\shef$, i.e., are \emph{pure}.  Indeed, this was in part Hazen
  and Pelletier's goal and their rules used this approach. This
  alternative approach can however not be used if no such ``explicit
  contradiction'' can be expressed. See also Section~\ref{fitch}
  below.}

\begin{proposition}\label{prop:transl}
$\NS$ derives $A$ from assumptions in $\Gamma$ iff $\LS'$ proves the
  sequent $\Gamma \vdash A$ (or $\Gamma \vdash$ if $A = \bot$).
\end{proposition}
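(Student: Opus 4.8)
The plan is to prove both directions by induction on the height of derivations, matching the rules of $\NS$ and $\LS'$ one at a time. For the direction from $\NS$ to $\LS'$, I would induct on a derivation $D$ of $A$ (or of $\bot$) whose open assumptions are contained in $\Gamma$. If $D$ is a lone assumption $A$, the axiom $A \vdash A$ followed by left weakening gives $\Gamma \vdash A$. If $D$ ends in $\shef I$ with discharged assumptions $[A]$, $[B]$ and premise subderivation concluding $\bot$, the induction hypothesis yields $A, B, \Gamma' \vdash$ with $\Gamma' \subseteq \Gamma$, so $\shef R'$ and left weakening give $\Gamma \vdash A \shef B$. If $D$ ends in $\shef E$ from premises $A \shef B$, $A$, $B$, the three induction hypotheses give $\Gamma \vdash A \shef B$, $\Gamma \vdash A$, $\Gamma \vdash B$; then $\shef L'$ applied to the latter two produces $A \shef B, \Gamma \vdash$, and a cut on $A \shef B$ yields $\Gamma \vdash$. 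If $D$ ends in $\bot_I$, the induction hypothesis gives $\Gamma \vdash$, and $WR'$ gives $\Gamma \vdash A$.

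For the converse, I would induct on an $\LS'$ proof of $\Gamma \vdash A$ (or $\Gamma \vdash$). An axiom $A \vdash A$ becomes the trivial derivation consisting of the single assumption $A$. The left structural rules are absorbed by the way $\NS$ tracks assumptions: left weakening corresponds to an assumption that goes unused, and left exchange and contraction are vacuous once the open assumptions of a derivation are read as a set. The rule $WR'$ is mirrored by appending a $\bot_I$ step; $\shef R'$ by an application of $\shef I$ that discharges the assumptions $A$ and $B$ in the derivation supplied by the induction hypothesis; and $\shef L'$ by an application of $\shef E$ taking the two induction-hypothesis derivations of $A$ and of $B$ from $\Gamma$ as premises alongside the assumption $A \shef B$, yielding $\bot$ from $A \shef B, \Gamma$. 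Finally, cut is simulated by composition of derivations: the derivation of $C$ from $\Gamma$ is grafted onto each open occurrence of the assumption $C$ in the derivation of $A$ (or $\bot$) from $C, \Gamma$.

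The one step requiring genuine care is this cut case. Grafting one derivation onto the leaves of another must be done so that the discharge structure of the host is not disturbed: no assumption discharged in the host may capture an open assumption of the grafted derivation, and every assumption discharged inside the grafted derivation must remain discharged. This is handled by a routine renaming of discharge labels, and it is precisely the step that on the sequent side corresponds to the cut rule of $\LS'$, so nothing deeper is involved. Because the single-conclusion restriction keeps every succedent to at most one formula, the placeholder $\bot$ behaves uniformly throughout, and the case $A = \bot$ is covered verbatim by the translations above, with the conclusion $\bot$ of an $\NS$ derivation corresponding to the empty succedent on the sequent side.
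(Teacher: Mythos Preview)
Your proposal is correct and follows essentially the same approach as the paper: both directions are proved by induction on the structure of derivations, with the same rule-to-rule correspondence (axiom $\leftrightarrow$ assumption, $WR' \leftrightarrow \bot_I$, $\shef R' \leftrightarrow \shef I$, $\shef L' \leftrightarrow \shef E$ plus a cut, cut $\leftrightarrow$ grafting). Your treatment is in fact slightly more careful than the paper's in explicitly handling the left structural rules and the discharge-label hygiene in the cut case.
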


\begin{proof}
We translate each proof in $\LS'$ of $\Gamma \vdash A$
into a deduction in $\NS$ of $A$, and one of $\Gamma
\vdash$ into one of $\bot$, where all open assumptions are
in~$\Gamma$. Conversely, any deduction in $\NS$ of $A$ from
assumptions~$\Gamma$ can be translated into a proof in $\LS'$ of
$\Gamma \vdash A$, or of $\Gamma \vdash$ if $A = \bot$.

Assume $\LS$ proves $\Gamma \vdash A$ or (or $\Gamma \vdash$) via a
proof~$\pi$. We construct an $\NS$ derivation $\delta$ of $A$ (or
$\bot$) from assumptions $\Gamma' \subseteq \Gamma$ by induction. If
$\pi$ only consists of the an axiom $A \vdash A$, then $A$ is a
derivation of $A$ (which depends on~$A$).  If $\pi$ ends in a $WR$,
the conclusion is $\Gamma \vdash A$ and the premise is $\Gamma
\vdash$.  By induction hypothesis, we have a derivation of $\bot$ from
$\Gamma'$.  Add $\bot_I$ with conclusion $A$ to this derivation.

If $\pi$ ends in a cut, the last inference is of the form
\[
\infer[cut]{\Gamma_1, \Gamma_2 \vdash B}{\Gamma_1
  \vdash A &A, \Gamma_2 \vdash B}
\]
By induction hypothesis, we have $\NS$ derivations $\delta_1$ of $A$
from assumptions in $\Gamma_1$ and a derivation of $B$ from
assumptions in $A$, $\Gamma_2$.

If $A$ is not actually an open assumption in $\delta_2$, $\delta_2$
already is a derivation of $B$ from $\Gamma_2' \subseteq \Gamma_1 \cup
\Gamma_2$.  Otherwise, $\delta_2$ does contain undischarged
assumptions~$A$.  Append the derivation $\delta_1$ of $A$ (with open
assumptions $\Gamma_1' \subseteq \Gamma_1$) to all leaves where $A$ is
undischarged in $\delta_2$. This is a derivation of~$B$ from
assumptions $\Gamma_1', \Gamma_2' \subseteq \Gamma_1, \Gamma_s$.

If $\pi$ ends in $\shef R'$, the last sequent is $\Gamma \vdash A
\shef B$ and the premise is $A, B, \Gamma \vdash$. By induction
hypothesis there is a derivation $\delta_1$ of $\bot$ with open
assumptions among $A, B, \Gamma$. Add a $\shef I$ rule to $\delta_1$
and discharge any assumptions of the form $A$ or $B$.

If $\pi$ ends in $\shef L$, the last sequent is $A \shef B, \Gamma
\vdash $ and the premises are $\Gamma \vdash A$ and $\Gamma \vdash
B$. By induction hypothesis we have deductions $\delta_1$ of $A$ from
$\Gamma_1 \subseteq \Gamma$ and $\delta_2$ of $B$ from $\Gamma_2
\subseteq \Gamma$, respectively. Use these and an assumption $A \shef
B$ as premises for a $\shef E$ rule with conclusion~$\bot$. The
result is a derivation of $\bot$ from assumptions $A \shef B,
\Gamma_1, \Gamma_2 \subseteq A \shef B, \Gamma$.

In the other direction we proceed the same way. Derivations consisting
of assumptions~$A$ alone are translated into axiom sequents $A \vdash A$.
Derivations ending in $\bot_I$ result in proofs ending in $WR$.  If a
derivation ends in $\shef I$ with the premise $\bot$ being derived
from assumptions $A, B, \Gamma$, by induction hypothesis we have an
$\LS'$-proof of $A, B, \Gamma \vdash$. Apply $\shef R$ to obtain a
proof of $\Gamma \vdash A \shef B$.  If a derivation ends in $\shef
E$, its premises are $A \shef B$, $A$, and $B$ and its conclusion
is~$\bot$. By induction hypothesis, we have $\NS'$-proofs $\pi_1$,
$\pi_2$, $\pi_3$ of $\Gamma_1 \vdash A \shef B$, $\Gamma_2 \vdash A$
and $\Gamma_3 \vdash B$ with $\Gamma_i \cup \Gamma_2 \cup \Gamma_3 =
\Gamma$.  We obtain a proof of $\Gamma \vdash \Delta$ as follows:
\[
  \infer[cut]{\Gamma_1, \Gamma_2, \Gamma_3 \vdash }{
    \infer*[\pi_1]{\Gamma_1 \vdash A \shef B}{} & 
    \infer[\shef L]{A \shef B, \Gamma_2, \Gamma_3 \vdash }{
      \infer={\Gamma_2, \Gamma_3 \vdash A}{
        \infer*[\pi_2]{\Gamma_2 \vdash A}{}} &
      \infer={\Gamma_2, \Gamma_3 \vdash B}{
        \infer*[\pi_3]{\Gamma_3 \vdash B}{}}}}
\]
\qed\end{proof}

\section{Intuitionistic Sheffer strokes}
\label{int}

It is easily seen that the systems $\LS'$ and $\NS$ are not complete.
Consider a three-valued interpretation with truth values $\bot < I <
\top$ with $\top$ designated, and consider the following truth table
for $\shef$:
\[
\begin{array}{c|ccc}
A \shef B & \bot & I & \top \\
\hline
\bot & \top & \top & \top \\
I & \top & \bot  & \bot\\
\top & \top & \bot & \bot
\end{array}
\]
$\LS'$ is sound for this interpretation, i.e., whenever $\LS'$ proves
$\Gamma \vdash A$ then $\min \{ v(A) \colon A \in \Gamma\} \le v(A)$,
and whenever it proves $\Gamma \vdash$, then $\min\{ v(A) : A \in
\Gamma\} = \bot$, for any truth-value assignment~$v$. Consider
$(A\shef A)\shef(A\shef A)$. For the assignment $v(A) = I$, we have
$v((A\shef A)\shef(A\shef A)) = \top > v(A)$, hence the sequent $(A
\shef A)\shef (A \shef A) \vdash A$ is unprovable in
$\LS'$. Consequently there is no $\NS$ deduction of $A$ from the only
assumption~$(A \shef A)\shef (A \shef A)$.  This should not be
surprising. If we abbreviate $A \shef A$ as $\lnot A$, then this
amounts to the intuitionistically invalid inference of double negation
elimination, i.e., $\lnot\lnot A \vdash A$.

Since $\LS'$ and $\NS$ are not complete for the Sheffer stroke the
question arises if one can give a semantics for~$\shef$ relative to
which these systems are complete.  Indeed this is the case. It is
again not surprising that a version of intuitionistic Kripke frames
can serve this purpose.  This can be obtained by applying a
general result about all finite-valued logics due to
\citet{BaazFermuller:96} to the two-valued case and the truth table
for~$\shef$.  An intuitionistic Kripke model is $M = \langle W, \le,
v\rangle$ where $W$ is a non-empty set of worlds, $\le$ is a partial
order on~$W$, and $v$ is a function from the propositional variables
to $\wp(W)$ satisfying $p \in v(w')$ whenever $w \le w'$ and $p \in
v(w)$ (in other words, the interpretation of predicate variables is
monotonic).  We then define the satisfaction relation by:
\begin{enumerate}
\item $M, w \Vdash p$ iff $v(p) \in w$
\item $M, w \Vdash A \shef B$ iff for all $w' \ge w$, either $M, w'
  \nVdash A$ or $M, w' \nVdash B$.
\end{enumerate}
(If one wants to also include other connectives $\lnot$, $\lor$,
$\land$, etc., the corresponding standard satisfaction conditions for
them may be added as well. It is then evident that $M, w \Vdash A
\shef B$ iff $M, w \Vdash \lnot(A \land B)$ as one would expect.)  One
can then easily verify soundness, and prove the following completeness
theorem by a version of Sch\"utte's reduction tree method:

\begin{proposition}
If $\LS'$ does not prove $\Gamma \vdash A$, then there is a Kripke
model $M$ and a world~$w$ so that $M, w \Vdash B$ for all $B \in
\Gamma$ but $M, w \nVdash A$.
\end{proposition}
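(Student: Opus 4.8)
The plan is to carry out a version of Sch\"utte's reduction-tree method directly for $\LS'$ and to read a Kripke countermodel off the resulting tree. First I would build, for the given sequent $\Gamma \vdash A$, a tree $\mathcal{T}$ of sequents by backward (bottom-up) proof search, taking antecedents to be finite sets of formulas so that contraction and exchange are absorbed. At a node that is not an axiom: if its succedent is a formula $C \shef D$, apply $\shef R'$ backward; if its succedent is an atom not occurring in the antecedent, apply $WR'$ backward to clear it; if its succedent is empty, pick an antecedent formula of the form $C \shef D$ and apply $\shef L'$ backward, retaining $C \shef D$, branching into the two goals $C$ and $D$. Schedule the $\shef L'$ steps fairly, so that along every branch every antecedent $\shef$-formula that ever appears is decomposed cofinally often. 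Each step, read downward, is an $\LS'$ inference; so if $\mathcal{T}$ were a proof --- finite, with every leaf an axiom --- it would be a (cut-free) $\LS'$-derivation of $\Gamma \vdash A$, which by hypothesis it is not.

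Let $\mathcal{T}^{*}$ be the set of nodes $n$ such that the subtree of $\mathcal{T}$ rooted at $n$ is not a proof. The root lies in $\mathcal{T}^{*}$, and every non-leaf node of $\mathcal{T}^{*}$ has a child in $\mathcal{T}^{*}$: for the unary steps this is immediate, and for a $\shef L'$ step it holds because a node whose two children both root proofs would itself root a proof. Hence the leaves of $\mathcal{T}^{*}$ are exactly the non-axiom (``saturated'') leaves of $\mathcal{T}$ --- those with empty succedent and no antecedent $\shef$-formula. Define the Kripke model $M$ whose worlds are the nodes of $\mathcal{T}^{*}$, ordered by the tree-descendant relation $\le$, in which an atom $p$ is true at a world $w$ exactly when $p$ occurs in the antecedent of $w$; the valuation is monotone because antecedents only grow along edges.

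The core is a truth lemma, proved by induction on formula complexity, asserting for every node $w$ of $\mathcal{T}^{*}$ that (i) every formula in the antecedent of $w$ is forced at $w$, and (ii) the succedent formula of $w$, if any, is not forced at $w$. Atoms are immediate from the valuation and the fact that no node of $\mathcal{T}^{*}$ is an axiom. For (ii) with $B = C \shef D$, the backward $\shef R'$ step at $w$ produces a child $w^{+} \ge w$ with both $C$ and $D$ in its antecedent, so by (i) $w^{+} \Vdash C$ and $w^{+} \Vdash D$, refuting $C \shef D$ at $w$. For (i) with $B = C \shef D$: given any $w' \ge w$, persistence keeps $C \shef D$ in the antecedent of $w'$, so by fairness some $w'' \ge w'$ applies $\shef L'$ to it; $w''$ has a child in $\mathcal{T}^{*}$ whose succedent is $C$ or $D$, and (ii) together with persistence then gives $w' \nVdash C$ or $w' \nVdash D$. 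Hence $w \Vdash C \shef D$. Applied at the root, this produces a world $w$ with $M, w \Vdash B$ for all $B \in \Gamma$ and $M, w \nVdash A$.

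The step I expect to be the main obstacle is exactly clause (i) of the truth lemma for a Sheffer stroke in the antecedent, together with getting the pruning and scheduling right. Since $\shef L'$ can be applied only when the succedent is empty and is then not invertible in the presence of a succedent formula, the countermodel cannot be read off a single branch: it must live on a subtree in which a surviving $\shef L'$ premise always supplies a new world, and this is precisely what forces genuinely Kripke (rather than single-valuation) semantics here. Fixing a scheduling of the backward steps under which every antecedent $\shef$-formula is decomposed cofinally on every branch of $\mathcal{T}^{*}$, and checking that the induction only ever appeals to strictly simpler formulas, are where the real care is needed; alternatively, the statement follows by instantiating the general Kripke completeness theorem for finite-valued logics of \citet{BaazFermuller:96} at the two-element set of truth values and the truth table for $\shef$.
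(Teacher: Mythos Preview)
Your proposal is correct and follows exactly the approach the paper points to: the paper's own proof is merely the citation ``See \citet[Theorem~4.2]{BaazFermuller:96},'' while the surrounding text already names Sch\"utte's reduction-tree method as the intended technique. You have simply unpacked that citation for the specific case of $\LS'$ and~$\shef$, and your final sentence even records the same fallback to the general Baaz--Ferm\"uller theorem; so there is no substantive divergence, only more detail than the paper supplies.
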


\begin{proof}
See \citet[Theorem~4.2]{BaazFermuller:96}.
\qed\end{proof}

If the rules $\shef I$, $\shef E$ are added to $\NJ$ instead of being
considered on their own, we can also show completeness by proving that
$A \shef B \vdash \lnot(A \land B)$ and $\lnot(A \land B) \vdash A
\shef B$.  This is what Hazen and Pelletier call ``parasitic
completeness,'' but it has one interesting consequence: it shows that
our rules for $\shef$ in $\LS$ (and hence also the intelim rules in
$\NS$) are---unlike their rules---not ambiguous between two different
connectives: in $\LS$ and $\NS$, $\shef$ defines the intuitionistic
connective $\lnot(A \land B)$.

This of course raises the question: what \emph{do} Hazen and
Pelletier's rules define, if anything?

Let us use $\fesh$ for the intuitionistic connective introduced by Hazen
and Pelletier.  Their rules are formulated for a Ja\'skowski-Fitch
style natural deduction system in which proofs (and subproofs) are
sequences of formulas with scope of assumptions indicated, and instead
of a primitive~$\bot$ they make use of ``explicit contradictions,''
i.e., triples of formulas of the forms~$A \shef B$, $A$, $B$. In our
Gentzen-style framework, their rules can be formulated as follows:
\[
\infer[\fesh I_1]{A \fesh B}{\infer*{\bot}{[A]}} \qquad 
\infer[\fesh I_2]{A \fesh B}{\infer*{\bot}{[B]}} \qquad 
\infer[\fesh E]{C}{A \fesh B & A & B}
\]
These rules correspond to the intuitionistic sequent rules:
\[
\infer[\fesh R'_1]{\Gamma \vdash A \fesh B}{A, \Gamma \vdash } 
 \qquad 
\infer[\fesh R'_2]{\Gamma \vdash A \fesh B}{B, \Gamma \vdash } 
 \qquad 
\infer[\fesh L']{A \fesh B, \Gamma \vdash }{\Gamma \vdash A & \Gamma \vdash B}
\]
The versions $\fesh R_1$, $\fesh R_2$, $\fesh L$ of these rules
without the restriction to a single formula in the succedent are of
course equivalent to $\shef L$ and $\shef R$, i.e., in the presence of
contraction on the right, $\LS$ plus $\fesh R_1$, $\fesh R_2$, $\fesh
L$ proves both $(A \shef B) \vdash (A \fesh B)$ and $(A \fesh B)
\vdash (A \shef B)$.  In the intuitionistic system, i.e., $\LS'$ plus
$\fesh R_1'$, $\fesh R_2'$, $\fesh_L'$, only $(A \fesh B) \vdash (A
\shef B)$ is provable.  This corresponds to the provability in $\LJ$
of $(\lnot A \lor \lnot B) \vdash \lnot(A \land B)$ and the
\emph{un}provability of $\lnot(A \land B) \vdash (\lnot A \lor \lnot
B)$.

However, while $\shef$ defined by $\shef R'$, $\shef L'$ characterizes
$A \shef B$ intuitionistically as $\lnot(A \land B)$, $\fesh R_1'$,
$\fesh R_2'$, $\fesh L'$ do \emph{not}, as one might at first glance
suspect, intuitionistically define $A \fesh B$ as $\lnot A \lor \lnot
B$.  In other words, $\fesh R_1'$, $\fesh R_2'$, $\fesh L'$ is not
parasitically complete: $\LJ$ plus these rules does not prove $(A
\fesh B) \vdash (\lnot A \lor \lnot B)$. (This can be shown by
appealing to \possessivecite{Harrop:60} extended version of the
disjunction property in~$\LJ$; see \citealt[Theorem 6.14]{Takeuti:87}
or \citealt[Theorem 4.2.3]{Troelstra:00}.) Note also that the rules do
not suffice to even prove $A \fesh B \vdash A \fesh B$ from axioms $A
\vdash A$ and $B \vdash B$.

In order to complete the calculus so as to obtain these results, one
could replace the $\fesh L$ rule with the following two:
\[
\infer[\fesh L'']{A \fesh B, \Gamma \vdash C}{A \fesh A, \Gamma \vdash
  C & B\fesh B, \Gamma \vdash C}
\qquad
\infer[\lnot R]{A \fesh A, \Gamma \vdash}{\Gamma \vdash A}
\]
The corresponding natural deduction rules are:\footnote{See errata.}
\[
\infer[\fesh E']{C}{A \fesh B & \infer*{C}{[A]} & \infer*{C}{[B]}}
\qquad
\infer[\lnot E]{\bot}{A\fesh A \qquad A}
\]

\section{Classical single-conclusion systems}

We have seen that the intelim rules for $\shef$ do not by themselves
yield a complete natural deduction system for the classical Sheffer
stroke.  To obtain a classically complete system we have to add a new
rule~$\shef E_C$ which corresponds to the classical $\lnot E$ rule
(Prawitz's $\bot_C$).
\[
\infer[\bot_C]{A}{\infer*{\bot}{\lnot A}}
\qquad
\infer[\shef E_C]{A}{\infer*{\bot}{A \shef A}}
\]
This results in a system~$\NS_C$ complete for the classical
interpretation of~$\shef$. For instance, we can deduce~$A$ from
$(A\shef A)\shef(A \shef A)$ as follows:
\[
\infer[\shef E_C: 1]{A}{
  \infer[\shef E]{\bot}{(A \shef A)\shef(A \shef A) & [A \shef A]^1}}
\]

\begin{proposition}\label{prop:NS-C}
$\NS_C$ is sound and complete.
\end{proposition}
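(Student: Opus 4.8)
The plan is to obtain soundness by a short induction on $\NS_C$ derivations and completeness by reducing to the cut-free completeness of~$\LS$ proved above. For soundness I read ``$\NS_C$ derives~$A$ from~$\Gamma$'' as asserting $\Gamma \models A$ in the classical two-valued reading of~$\shef$, and ``$\NS_C$ derives~$\bot$ from~$\Gamma$'' as asserting that~$\Gamma$ is classically unsatisfiable. Every rule of~$\NS_C$ except~$\shef E_C$ already belongs to~$\NS$, and~$\NS$ is classically sound because it corresponds (Proposition~\ref{prop:transl}) to~$\LS'$, which is a subsystem of the classically sound~$\LS$; so the only new case is~$\shef E_C$, where the subderivation shows $\Gamma \cup \{A \shef A\}$ to be unsatisfiable, and since $A \shef A$ is true exactly when~$A$ is false, every valuation satisfying~$\Gamma$ makes~$A$ true, i.e.\ $\Gamma \models A$.

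For completeness, the key is a translation lemma proved by induction on a cut-free $\LS$-proof~$\pi$ of a sequent $\Gamma \vdash C_1, \dots, C_n$: there is an $\NS_C$ derivation of~$\bot$ all of whose open assumptions lie in $\Gamma \cup \{C_1 \shef C_1, \dots, C_n \shef C_n\}$. The guiding idea is that $C \shef C$ behaves in~$\NS_C$ as a classical negation of~$C$ — $\shef I$ and~$\shef E$ with $A = B = C$ are $\lnot$-introduction and~$\lnot$-elimination, and~$\shef E_C$ with $A = C$ is~$\bot_C$. An axiom $A \vdash A$ translates to one~$\shef E$ with principal premise the assumption $A \shef A$ and both minor premises the assumption~$A$; the structural rules are immediate once the statement is read in terms of \emph{sets} of open-assumption formulas (contraction and exchange on the right become vacuous, weakenings add unused assumptions). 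For~$\shef R$, with premise $A, B, \Gamma \vdash \Delta$ and conclusion $\Gamma \vdash \Delta, A \shef B$, I would take the derivation of~$\bot$ from~$\Gamma$, the $C \shef C$ with $C \in \Delta$, and $A$ and~$B$ given by the induction hypothesis, discharge $A$ and~$B$ via~$\shef I$ to get~$A \shef B$, and then feed this (as both minor premises) together with the assumption $(A \shef B)\shef(A \shef B)$ into~$\shef E$. For~$\shef L$, with premises $\Gamma \vdash \Delta, A$ and $\Gamma \vdash \Delta, B$ and conclusion $A \shef B, \Gamma \vdash \Delta$, the induction hypothesis gives derivations of~$\bot$ from~$\Gamma$, the $C \shef C$ with $C \in \Delta$, and $A \shef A$ (respectively $B \shef B$); applying~$\shef E_C$ to each, discharging $A \shef A$ (respectively $B \shef B$), yields derivations of~$A$ and of~$B$, which together with the assumption $A \shef B$ feed into~$\shef E$. (If one would rather not appeal to cut-elimination, the $cut$ case is handled as in Proposition~\ref{prop:transl}: use~$\shef E_C$ on the translated left premise to recover the cut formula, then graft it onto the open leaves of the translated right premise.)

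The proposition then follows: if $\Gamma \models A$ classically, $\LS$ proves $\Gamma \vdash A$, so the lemma with $n = 1$ yields an $\NS_C$ derivation of~$\bot$ from $\Gamma' \cup \{A \shef A\}$ for some finite $\Gamma' \subseteq \Gamma$, and one further~$\shef E_C$ discharging $A \shef A$ gives a derivation of~$A$ from~$\Gamma'$; if~$\Gamma$ is classically unsatisfiable, $\LS$ proves $\Gamma \vdash$ and the lemma with $n = 0$ already supplies a derivation of~$\bot$ from a subset of~$\Gamma$. I expect the main obstacle to be purely organizational — keeping the discharge bookkeeping straight in the $\shef R$, $\shef L$ and $cut$ cases, i.e.\ tracking which open assumptions are the ``negated-succedent'' formulas $C \shef C$ and which are genuine antecedent formulas, and checking that exactly the right ones are discharged at each step. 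It is also worth flagging that classicality enters only through~$\shef E_C$ (once inside the $\shef L$ case, once at the very end), so that deleting~$\shef E_C$ breaks the argument — as it must, since the resulting system is exactly the incomplete~$\NS$, consistent with~$\shef E_C$ being the $\shef$-analogue of Prawitz's~$\bot_C$.
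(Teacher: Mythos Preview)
Your argument is correct, but it is organized differently from the paper's. The paper interposes an auxiliary single-conclusion sequent calculus~$\LS'_C$, obtained from~$\LS'$ by adding the rule
\[
\infer[\shef L_C]{\Gamma \vdash A}{\lnot A, \Gamma \vdash A}
\]
(with $\lnot A$ abbreviating $A \shef A$), and then proceeds in two steps: first, extend the translation of Proposition~\ref{prop:transl} to one between $\LS'_C$-proofs and $\NS_C$-deductions; second, show by induction on proof height that $\LS$ proves $\Gamma \vdash \Delta, \Delta'$ iff $\LS'_C$ proves $\Gamma, \lnot\Delta \vdash \Delta'$ (with $\Delta'$ empty or a single formula). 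Your translation lemma fuses these two steps: the induction produces $\NS_C$-derivations directly from $\LS$-proofs, with the succedent formulas~$C_i$ reappearing as open assumptions~$C_i \shef C_i$. What the paper's factoring buys is modularity --- the ``classicalization'' is isolated at the sequent level (and the paper notes the connection to stability axioms as in Troelstra--Schwichtenberg) --- while your route is shorter and avoids introducing a third calculus. The inductive content is the same: your $\shef L$ case is precisely where $\shef E_C$ does the work that $\shef L_C$ does for the paper, and your final application of $\shef E_C$ to pass from the $n=1$ lemma to a derivation of~$A$ corresponds to the paper's last use of~$\shef L_C$ to shrink $\lnot\Delta$ back into the succedent.
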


\begin{proof}
Completeness is proved by extending the translation between
$\LS'$-proofs and $\NS$-deductions of Prop~\ref{prop:transl} to
translations between $\LS'_C$-proofs and $\NS_C$-deductions, where
$\LS'_C$ is the calculus resulting from $\LS'$ by adding the rule
\[
\infer[\shef L_C]{\Gamma \vdash A}{\lnot A, \Gamma \vdash A}
\]
One then also shows, by induction on the height of proofs, that if
$\LS'_C$ proves $\Gamma, \lnot \Delta \vdash A$ then $\LS$ proves
$\Gamma \vdash \Delta, A$, and conversely, if $\LS$ proves $\Gamma
\vdash \Delta, \Delta'$ then $\LS'_C$ proves $\Gamma, \lnot \Delta
\vdash \Delta'$, where $\Delta'$ may be empty or a single
formula. This establishes that $\LS'_C$ is complete. (See also
\citealt[Section~3.3.2]{Troelstra:00}; our rule $\shef L_C$ plays the
role of stability axioms here.)
\qed\end{proof}

\section{Normalization}

One important fact about Gentzen's natural deduction calculus $\NJ$ is
that it \emph{normalizes}, i.e., every derivation can be transformed
into one in which is normal: no formula occurrence is both the
conclusion of an introduction rule and the major premise of an
elimination rule. Such formula occurrences are called \emph{maximal
  formulas}. This result was first established in print by
\cite{Prawitz:65}. It is proved by showing that maximal formula
occurrences can be eliminated from derivations, one at a time. The
result then follows by induction on the number of maximal formula
occurrences in a derivation. 

The result also holds for $\NS$, and indeed also for $\NJ$ to which we
add the $\shef I$ and $\shef E$ rules.  All that is required is to
provide the \emph{reduction} for maximal formulas of the form~$A \shef
B$; the argument of \citet[Ch.~II]{Prawitz:65} otherwise goes through
without change.  If a formula occurrence of the form $A \shef B$ is
maximal in a derivation, the derivation has the form on the left, and
can be transformed into the derivation on the right, in which the
maximal formula occurrence is removed:
\[
\infer[\shef E]{\bot}{
  \infer[\shef I]{A \shef B}{\infer*[\delta_1]{\bot}{[A] & [B]}} & 
  \infer*[\delta_2]{A}{} & 
  \infer*[\delta_3]{B}{}}
\qquad
\infer*[{\delta_1[\delta_2/A,\delta_3/B]}]{\bot}{
  \infer*[\delta_2]{A}{} & \infer*[\delta_3]{B}{}}
\]
The derivation $\delta_1[\delta_2/A,\delta_3/B]$ is the derivation
resulting from $\delta_1$ by appending $\delta_2$ to all assumptions
of the form $A$ discharged by the $\shef I$ rule, and $\delta_3$ to
all such assumptions of the form~$B$.

\citet[Ch.~III]{Prawitz:65} also showed that the classical system with
intelim rules for $\lnot$, $\land$, $\to$, $\forall$, with $\bot_C$,
but without $\lor$ and $\exists$ also has normalization. Prawitz shows
that for this system, we may restrict applications of $\bot_C$ to
atomic formulas~$A$.  This is done by showing that any application of
$\bot_C$ for a composite formula can be transformed into one where
$\bot_C$ is instead applied to its immediate subformulas.  The result
then follows by induction on the complexity of formulas~$A$ used
in~$\bot_C$ inferences. It is this step that does not work for
formulas of the form $A \lor B$ or $\exists x\, A(x)$.

We may extend Prawitz's result to $\NS_C$ and to natural deduction
systems which include $\lnot$ and its rules including $\bot_C$
directly, or which define $\lnot A$ as $A \shef A$ and use the rules
for $\shef$, including~$\shef E_C$.  To do this, it suffices to show
that the fact mentioned above (i.e., Prawitz's Th.~1 of Ch.~III) also
holds for $\NS$ and related systems not including~$\lor$.  An
application of $\shef E_C$ applied to $A \shef B$ in a derivation
appears as the subderivation on the left, which can be replaced by the
subderivation on the right (for simplicity, abbreviate $C \shef C$ by
$\lnot C$):
\[
\infer[\shef E_C: 1]{A \shef B}{
  \infer*[\delta]{\bot}{[\lnot (A \shef B)]^1}}
\quad
\infer[\shef I: 2]{A \shef B}{
  \infer*[\delta']{\bot}{
    \infer[\shef I: 1]{\lnot(A \shef B)}{
      \infer[\shef E]{\bot}{[A \shef B]^1 & [A]^2 & [B]^2}}}}
\]
In $\delta'$, the assumptions in $\delta$ of the form $\lnot(A \shef
B)$ which are discharged by the~$\shef E_C$ rule are replaced by the
derivations of $\lnot(A \shef B)$ indicated.

\section{Ja\'skowski-Fitch and Suppes-Lemmon natural deduction}
\label{fitch}

The natural deduction systems of \cite{Jaskowski:34} and
\cite{Fitch:52} and their variants are not commonly used for
proof-theoretic investigations, but they are the most common systems
of natural deduction appearing in introductory textbooks.  They differ
from Gentzen-style single-conclusion natural deduction systems in a
number of ways: First, while in Gentzen-type systems proofs have a
tree structure, in Ja\'skowski-Fitch style systems they are linear
sequences of formulas, which may however be grouped into possibly
nested \emph{subproofs}.  Every subproof begins with an assumption,
and subproofs themselves can serve as premises to inferences. The
conclusion of these inferences then no longer depend on the assumption
of the subproof.

In order to obtain a Ja\'skowski-Fitch type system for the Sheffer
stroke, specifically, to accommodate the $\shef I$ rule, we must allow
for subproofs to have two assumption formulas, or alternatively, relax
the restriction on subproofs having their last formula not appear in a
nested subproof itself.  The corresponding versions of the $\shef I$,
$\shef E$, and $\shef E_C$ rules are as follows:\footnote{The $\shef
  E$ rules below do not correspond to the natural deduction rules
  given above. See errata section at end.}
\[
\fitchctx{
\subproof{\nline{A}\\
\nline{B}}{
\ellipsesline\\
\pline{\bot}}
\fpline{A \shef B}[$\shef I$]
}
\mathrm{or} \quad
\fitchctx{
\subproof{\pline{A}}{
\ellipsesline\\
\subproofx{\pline{B}}{
\ellipsesline\\
\pline{\bot}}}
\fpline{A \shef B}[$\shef I$]
}
\quad
\fitchctx{
\pline{A \shef B}\\
\subproof{\pline{A}}{
\ellipsesline\\
\pline{\bot}}
\subproof{\pline{B}}{
\ellipsesline\\
\pline{\bot}}
\fpline{\bot}[$\shef E$]}
\quad
\fitchctx{
\subproof{\pline{A\shef A}}{
\ellipsesline\\
\pline{\bot}}
\fpline{A}[$\shef E_C$]}
\]
Some presentations of Fitch-style systems lack the $\bot$ constant as
a primitive, and instead require the presence of an ``explicit
contradiction,'' i.e., both $A$ and $\lnot A$, when $\bot$ appears as a
premise or the last line of a subproof used as a premise.  Instead of
deriving $\bot$ as a conclusion, the corresponding rules allow the
derivation of an arbitrary formula. Rules which result in such a
system (or which can be added to such a system) would be:
\[
\fitchctx{
\subproof{\nline{A}\\
\nline{B}}{
\ellipsesline\\
\pline{C}\\
\pline{C\shef C}}
\fpline{A \shef B}[$\shef I$]
}
\quad 
\fitchctx{
\subproof{\pline{A}}{
\ellipsesline\\
\subproofx{\pline{B}}{
\ellipsesline\\
\pline{C}\\
\pline{C \shef C}}}
\fpline{A \shef B}[$\shef I$]
}
\quad
\fitchctx{
\pline{A \shef B}\\
\subproof{\pline{A}}{
\ellipsesline\\
\pline{C}\\
\pline{C \shef C}}
\subproof{\pline{B}}{
\ellipsesline\\
\pline{D}\\
\pline{D \shef D}}
\fpline{E}[$\shef E$]}
\quad
\fitchctx{
\subproof{\pline{A\shef A}}{
\ellipsesline\\
\pline{C}\\
\pline{C \shef C}}
\fpline{A}[$\shef E_C$]}
\]

No relaxation of the syntax is needed in the style of natural
deduction due to \cite{Suppes:57} and \citealt{Lemmon:65}, where the
assumptions on which a formula depends are recorded in the deduction
with the formulas themselves, and discharging of assumptions is done
by allowing formulas to be removed from the assumption list.  The
rules might be stated as follows, following Lemmon:
\begin{quote}
$\shef$-Introduction: Given a proof of $\bot$ from $A$ and $B$ as
  assumptions, we may derive $A \shef B$ as conclusion. The conclusion
  depends on any assumptions on which $\bot$ depends in its derivation
  from $A$ and $B$ (apart from $A$ and $B$).

$\shef$-Elimination: Given a proof of $A \shef B$, together with a
  proof of $\bot$ from $A$ as assumption and a proof of $\bot$ from
  $B$ as assumption, we may derive $\bot$ as conclusion. The
  conclusion $\bot$ depends on any assumptions on which $A \shef B$
  depends, or on which $\bot$ depends in its derivation from~$A$
  (apart from~$A$), or on which $\bot$ depends in its derivation
  from~$B$ (apart from~$BA$).

Classical $\shef$-Elimination: Given a proof of $\bot$ from $A \shef
A$ as assumption, we may derive $A$ as conclusion. $A$ depends on any
assumptions on which $\bot$ depends in its derivation from $A \shef A$
(apart from $A \shef A$).
\end{quote}

In these rules we may consider $\bot$ primitive, or read it as ``a
contradiction,'' which again might mean ``both $C$ and $C \shef C$''
if no negation is available.

\section{Other connectives}

It bears emphasizing that the methods by which we have obtained the
rules for~$\shef$ are completely general and will produce rules for
any other truth-functional connective with the same systematic
features.  For instance, Pierce's arrow~$\pier$ or \textsc{nor}, the
dual of the Sheffer stroke, would be characterized by
\begin{description}
\item[T] $A \pier B$ is true iff $A$ is false and $B$ is false
\item[F] $A \pier B$ is false iff $A$ is true or $B$ is true
\end{description}
which would result in the sequent rules
\[
\infer[\pier R]{\Gamma \vdash \Delta, A \pier B}{A, \Gamma \vdash
  \Delta & B, \Gamma \vdash \Delta} 
\qquad 
\infer[\pier L]{A \pier B, \Gamma
  \vdash \Delta}{\Gamma \vdash \Delta, A, B} \qquad
\]
In order to obtain a single-conclusion sequent system we again leave
out the side formulas~$\Delta$. The presence of two formulas on the
right in the premise of $\pier$L is an obstacle for turning this rule
into single-conclusion natural deduction rule. However, we can replace
$\pier L$ by two rules in which the auxiliary formulas $A$ and $B$
appear in the premise only once.  By restricting the succedents to at
most one formula, we obtain the following rules:
\[
\infer[\pier R]{\Gamma \vdash A \pier B}{A, \Gamma \vdash
  & B, \Gamma \vdash } 
\qquad 
\infer[\pier L_1]{A \pier B, \Gamma
  \vdash }{\Gamma \vdash A} \qquad
\infer[\pier L_2]{A \pier B, \Gamma
  \vdash }{\Gamma \vdash B}
\]
We obtain the single-conclusion natural deduction rules
\[
\infer[\pier I]{A \pier B}{\infer*{\bot}{[A]} & \infer*{\bot}{[B]}}
\qquad
\infer[\pier E_1]{\bot}{A \pier B & A}
\qquad
\infer[\pier E_2]{\bot}{A \pier B & B}
\]
This natural deduction system, as well as the restricted sequent
calculus, are again not classically complete, but are complete for an
intuitionistic version of~$\pier$.  They can be made complete for
classical~$\pier$ by adding
\[
\infer[\pier E_C]{A}{\infer*{\bot}{A \pier A}}
\]

A slightly more complex example would be the \textsc{xor} connective.
Its truth and falsity conditions are given by:
\begin{description}
\item[T] $A \xor B$ is true iff ($A$ is true or $B$ is true) and ($A$
  is false or $B$ is false)
\item[F] $A \xor B$ is false iff ($A$ is false or $B$ is true) and ($B$
  is false or $A$ is true)
\end{description}
The corresponding sequent rules are
\[
\begin{array}{l}
\infer[\xor R]{\Gamma \vdash \Delta, A \xor B}{\Gamma \vdash
  \Delta, A, B & A, B, \Gamma \vdash \Delta} 
\qquad
\infer[\xor L]{A \xor B, \Gamma
  \vdash \Delta.}{A, \Gamma \vdash \Delta, B & B, \Gamma \vdash \Delta, A}
\end{array}
\]
Again, the $\xor$R rule may be replaced by
the two rules
\[
\begin{array}{l}
\infer[\xor R_1]{\Gamma \vdash \Delta, A \xor B}{\Gamma \vdash
  \Delta, A & A, B, \Gamma \vdash \Delta} 
\qquad
\infer[\xor R_2]{\Gamma \vdash \Delta, A \xor B}{\Gamma \vdash
  \Delta, B & A, B, \Gamma \vdash \Delta} 
\end{array}
\]
which result in the natural deduction rules
\[
\infer[\xor I_1]{A \xor B}{A & \infer*{\bot}{[A], [B]}}
\qquad
\infer[\xor I_2]{A \xor B}{B & \infer*{\bot}{[A], [B]}}
\qquad
\infer[\xor E]{\bot}{A \xor B & \infer*{B}{[A]} & \infer*{A}{[B]}}
\]
In this case, $\xor$ cannot be used to express~$\lnot$. It is thus not
clear whether a complete system can be obtained without adding $\lnot$
as a primitive together with the full classical set of rules for it.

Finally, the same method can be applied to
\possessivecite{Schonfinkel:24} quantifier $\shef^x$, the basis of his
$U$ combinator.  $A(x) \shef^x B(x)$ is true iff for every $x$, not
both $A(x)$ and $B(x)$ hold. Sound and complete sequent calculus rules
are
\[
\infer[\shef^x R]{\Gamma \vdash \Delta, A(x) \shef^x B(x)}{A(a),
  B(a), \Gamma \vdash \Delta}
\qquad
\infer[\shef^x L]{A(x) \shef^x B(x), \Gamma \vdash \Delta}{\Gamma
  \vdash \Delta, A(t) & \Gamma \vdash \Delta, B(t)}
\]
which lead to the following intelim rules,
\[
\infer[\shef^x I]{A(x) \shef^x B(x)}{\infer*{\bot}{[A(a)] &
 [B(a)]}}
\qquad
\infer[\shef^x E]{\bot}{A(a) \shef^x B(a) & A(t) & B(t)}
\]
all of course with the usual eigenvariable conditions on~$a$.

\section{Conclusion}

The specific question of how to come up with left and right rules for
the Sheffer stroke in the style of $\LK$, or introduction and
elimination rules in the style of $\NK$ might seem like a mere
recreational logic puzzle.  Nevertheless, the considerations above
have highlighted some interesting aspects of sequent calculus and
natural deduction.  The first is that there are general methods which
generate such rules automatically from their truth tables.  Not only
are the resulting systems complete, they also enjoy the same
properties (like cut-elimination) as the standard systems, and
transformations from one system to another (such as the translations
between proofs and derivations sketched in
Proposition~\ref{prop:transl}) work in both multiple-conclusion
(classical) and single-conclusion (intuitionistic) systems.  Our
consideration of how to make a single-conclusion natural deduction
calculus classical revealed the importance of the $\bot_C$ rule in
this respect: $\lnot L$ allows us to move formulas from the succedent
of a sequent to the right (where they then appear negated), and and
$\lnot E$ allows us in effect to turn a derivation \emph{of}~$A$ into
one of $\bot$ \emph{from}~$\lnot A$.  The rules $\shef L_C$ (of the
proof of Proposition~\ref{prop:NS-C}) and $\bot_C$ enable us to
reverse this, i.e., to move negated formulas from the antecedent of a
sequent to the succedent, and to transform a derivation in which
$\lnot A$ is an open assumption into one with $A$ instead as the
conclusion.  This means that the restriction to a single formula on
the right, which corresponds to the restriction to single conclusions
in natural deduction, can be circumvented. The same trick can be used
whenever the new connective(s) define(s)~$\lnot$ (as in the case of
$\shef$), but it can also be used to show how a pair of intelim rules
can be made classically complete by adding rules for $\lnot$ including
the $\bot_C$~rule.

\begin{acknowledgements}
I am grateful to Allen Hazen and Jeff Pelletier for helpful comments
as well as the question which originally prompted this paper.
\end{acknowledgements}

\bibliographystyle{spbasic} 
%\bibliography{logic}

\section{Errata}

The $\shef E$ rule of section~\ref{fitch} should read:
\[
\fitchctx{
\pline{A \shef B}\\
\pline{A}\\
\pline{B}\\
\fpline{\bot}[$\shef E$]}
\]
The rule printed in the text corresponds instead to a special version
of the $\fesh E'$ rule of section~\ref{int}. However, that rule is also incorrect. The $\fesh E'$ rule corresponding to $\fesh L''$ is
\[
\infer[\fesh E']{C}{A \fesh B & \infer*{C}{[A \fesh A]} & \infer*{C}{[B \fesh B]}}
\]

\end{document}